\documentclass[11pt]{article}
\usepackage{amsmath}
\usepackage{amssymb}
\usepackage{latexsym}
\usepackage{amsthm}
\usepackage{mathrsfs} 
\usepackage{wasysym}
\usepackage{fancyhdr}
\usepackage{xcolor}
\usepackage{amsfonts}
\usepackage{amssymb}
\usepackage{epsfig}
\usepackage{epstopdf}
\usepackage[normalem]{ulem}
\usepackage[T1]{fontenc}
\usepackage{eucal}
\usepackage{tikz}
\usepackage[utf8]{inputenc}
\usepackage{hyperref}
\usepackage{algorithm}
\usepackage{algpseudocode}
\usepackage{subcaption}

\newtheorem{Theorem}{Theorem}[section]

\newtheorem{Conjecture}[Theorem]{Conjecture}

\newtheorem{Lemma}[Theorem]{Lemma}

\title{A stronger upper bound on the D-chromatic index}
\author{Lin Tian\thanks{Corresponding author. Department of Mathematical Sciences, Middle Tennessee State University, Murfreesboro, TN 37132, USA. Email: \texttt{lt5b@mtmail.mtsu.edu}}
\and Runze Wang\thanks{Department of Mathematics and Computer Science, Augustana College, Rock Island, IL 61201, USA. Email: \texttt{runze.w@hotmail.com}}}
\date{}

\begin{document}

\maketitle
\begin{abstract}
    For a graph $G$, a proper edge coloring of $G$ is called a D-coloring if every diamond subgraph of $G$ is rainbow. Let $\chi'_D(G)$ be the D-chromatic index of $G$, which is the smallest integer $k$ such that $G$ admits a D-coloring with $k$ colors. Let $\Delta$ be the maximum degree of $G$. The only known Brooks-type upper bound on $\chi'_D(G)$ is $\frac{9}{16}\Delta^2 + \frac{1}{2}\Delta$, given by a greedy coloring. In this paper, using a probabilistic method, we obtain the first improvement upon this upper bound by proving that $\chi'_D(G) \le (1-c)\frac{9}{16}\Delta^2$ for some $c > 0$ and sufficiently large $\Delta$.
\end{abstract}

\section{Introduction}

    All graphs considered in this paper are finite and simple. Let $G$ be a graph with vertex set $V(G)$ and edge set $E(G)$. For $U\subseteq V(G)$, let $G[U]$ denote the subgraph of $G$ induced by $U$. For $v\in V(G)$, let $N_G(v)$ denote the open neighborhood of $v$. We call $G[N_G(v)]$ the \emph{neighborhood graph} of $v$. For two edges $e_1, e_2\in E(G)$, the distance between $e_1$ and $e_2$ is the distance between the corresponding vertices in the line graph of $G$. (Note that the distance between a pair of incident edges is~$1$.) Let $\Delta(G)$ denote the maximum degree of $G$. When the underlying graph is clear, we may simply write $\Delta$ for $\Delta(G)$.

    An edge coloring of $G$ is called a \emph{proper edge coloring} if incident edges get distinct colors. An edge coloring of $G$ is called a \emph{strong edge coloring} if any two edges with distance at most $2$ get distinct colors. The \emph{strong chromatic index} of $G$, denoted by $\chi'_s(G)$, is the smallest integer $k$ such that $G$ admits a strong edge coloring with $k$ colors. The following well-known conjecture was proposed by Erd\H os and Ne\v set\v ril~\cite{EN}.

    \begin{Conjecture}[Erd\H os and Ne\v set\v ril \cite{EN}]\label{erdos}
        For every graph $G$ with maximum degree $\Delta$,
        \[
            \chi'_s(G)\le\begin{cases}
            \frac{5}{4}\Delta^2 & \mbox{if }\Delta \mbox{ is even}; \\
            \frac{5}{4}\Delta^2-\frac{1}{2}\Delta+\frac{1}{4} & \mbox{if }\Delta \mbox{ is odd}.
            \end{cases}
        \]
    \end{Conjecture}

    While this conjecture remains open, it was proven that $\chi'_s(G)\le 1.998\Delta^2$ for sufficiently large $\Delta$ by Molloy and Reed \cite{MR-97}. Subsequently, this upper bound has been improved to $\chi'_s(G)\le 1.93\Delta^2$ by Bruhn and Joos~\cite{BJ}, to $\chi'_s(G)\le 1.835\Delta^2$ by Bonamy et al.~\cite{BPP}, and to $\chi'_s(G)\le 1.772\Delta^2$ by Hurley et al.~\cite{HDK}.
    
    Under an edge coloring of $G$, a subgraph $H \subseteq G$ is \emph{rainbow} if all the edges in $H$ receive distinct colors. A \emph{B-coloring} of $G$ is a proper edge coloring such that every $4$-cycle in $G$ is rainbow. B-coloring was introduced by Gy\'arf\'as and S\'ark\"ozy~\cite{GS-23} as a "less strong" edge coloring. It has recently been studied in \cite{GMRS,GSW,KWZ}.
    
    A \emph{diamond} is a graph obtained by deleting an edge from $K_4$. Motivated by the notions of strong edge coloring and B-coloring, Wang \cite{Wang-26} introduced a new graph edge coloring called \emph{D-coloring}. For a graph $G$, a D-coloring of $G$ is a proper edge coloring such that every diamond subgraph is rainbow. It is clear that a strong edge coloring is always a B-coloring, and a B-coloring is always a D-coloring.

    The \emph{D-chromatic index} of $G$, denoted by $\chi'_D(G)$, is the smallest integer $k$ such that $G$ admits a D-coloring with $k$ colors. By a greedy coloring, Wang~\cite{Wang-26} proved the following upper bound.
    
    \begin{Theorem}[Wang~\cite{Wang-26}]\label{theorem}
        For every graph $G$ with maximum degree $\Delta$,
        \[\chi'_D(G) \le \frac{9}{16}\Delta^2 + \frac{1}{2}\Delta. \]
    \end{Theorem}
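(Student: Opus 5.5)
The plan is a greedy argument. Colour the edges of $G$ one at a time, in any order. It suffices to show that every edge $e=uv$ has at most $\frac{9}{16}\Delta^2+\frac12\Delta-1$ \emph{D-neighbours}: edges $f\neq e$ that must receive a colour different from $e$, either because $f$ is incident to $e$ or because $e,f$ lie in a common diamond. Then a free colour always exists among $\frac{9}{16}\Delta^2+\frac12\Delta$ colours.

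\textbf{Step 1: describe the non-incident D-neighbours.} Let the diamond have degree-$3$ vertices $a,b$ and degree-$2$ vertices $c,d$. Its edges are $ab,ac,ad,bc,bd$, and the only disjoint pairs are $\{ac,bd\}$ and $\{ad,bc\}$. So if $f$ is not incident to $e=uv$, then up to swapping $u$ and $v$:
\begin{itemize}
\item $f=wx$, where $w\in T:=N(u)\cap N(v)$;
\item $x\in S:=(N(u)\cup N(v))\setminus\{u,v\}$, and $x\neq w$.
\end{itemize}
Let $t=|T|$. Then $|S|\le 2(\Delta-1)-t$, and $|S\setminus T|\le 2\Delta-2-2t$. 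Hence the non-incident D-neighbours are among the edges of $G$ with at least one end in $T$ and the other end in $S$.

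\textbf{Step 2: double-count these edges in two ways.} Let $e(T)$ be the number of edges inside $T$, and $e(T,S\setminus T)$ the number of edges between $T$ and $S\setminus T$. Each $w\in T$ is already adjacent to $u$ and $v$, so it has at most $\Delta-2$ further neighbours. This gives
\[ 2e(T)+e(T,S\setminus T)\le t(\Delta-2). \]
Each vertex of $S\setminus T$ has at most $t$ neighbours in $T$, which gives
\[ e(T,S\setminus T)\le t(2\Delta-2-2t). \]
Averaging the two bounds,
\[ e(T)+e(T,S\setminus T)\le \tfrac12\bigl(t(\Delta-2)+t(2\Delta-2-2t)\bigr)=\tfrac12 t(3\Delta-4-2t). \]
As a function of $t$, the right-hand side is maximised near $t=\frac34\Delta$. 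Its value there is about $\frac{9}{16}\Delta^2$.

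\textbf{Step 3 (main obstacle): the lower-order terms.} Naively adding the $2\Delta-2$ incident edges would give roughly $\frac{9}{16}\Delta^2+\Delta$, which is too many. I would first try to absorb part of the incident edges into the count. The edges $uw,vw$ with $w\in T$, and the edges from $u,v$ into $S$, interact with the first bound in Step 2. In particular, for vertices $x\in S\setminus T$, only one of $ux,vx$ exists. This should save about $t$ relative to the naive $2\Delta-2$, leaving an incident contribution of about $2\Delta-2-t$.

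With that saving, the total is bounded by a quadratic in $t$, namely
\[ \tfrac12 t(3\Delta-4-2t)+2\Delta-2-t. \]
I would then optimise this quadratic over integers $0\le t\le\Delta-1$, treating the parity of $\Delta$ separately if needed. The goal is to check that the maximum is at most $\frac{9}{16}\Delta^2+\frac12\Delta-1$. If the constant does not come out exactly, the next thing to try is a sharper version of the second inequality in Step 2, using the fact that $x\in S\setminus T$ is also adjacent to $u$ or $v$, so it has at most $\min(t,\Delta-1)$ neighbours in $T$.
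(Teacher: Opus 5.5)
Your Steps 1 and 2 are correct and match the count the paper uses from Wang. Your $T$ and $S\setminus T$ are the paper's $Z$ and $W$, and your bound $\frac12 t(3\Delta-4-2t)$ is the paper's $s\le -\alpha^2+(\frac32\Delta-2)\alpha$. The problem is Step 3, which is unnecessary and also false.

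It is unnecessary because the lower-order terms were misjudged. The maximum of $\frac12 t(3\Delta-4-2t)$ over real $t$ is $\frac{(3\Delta-4)^2}{16}=\frac{9}{16}\Delta^2-\frac32\Delta+1$. The $-\frac32\Delta$ term is what your estimate ``roughly $\frac{9}{16}\Delta^2+\Delta$'' dropped. Adding all $2\Delta-2$ incident edges gives exactly $\frac{9}{16}\Delta^2+\frac12\Delta-1$. Since the number of D-neighbours is an integer, greedy colouring with $\lfloor\frac{9}{16}\Delta^2+\frac12\Delta\rfloor$ colours already finishes the proof. You should drop Step 3 and the fallback sharpening and simply evaluate the quadratic exactly.

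Step 3 is false because the number of edges incident to $e$ is $\deg(u)+\deg(v)-2$, which does not depend on $t$. Each $w\in T$ contributes two such edges $uw,vw$, and each $x\in S\setminus T$ contributes one. When $u,v$ have degree $\Delta$ this totals $2t+(2\Delta-2-2t)=2\Delta-2$. The fact that $x\in S\setminus T$ sends only one edge to $\{u,v\}$ is exactly what gave $|S\setminus T|\le 2\Delta-2-2t$, and it cannot be spent a second time. Your proposed quadratic $\frac12 t(3\Delta-4-2t)+2\Delta-2-t$ is in fact violated by the following example:
\begin{itemize}
\item Let $\Delta=4k$ and $t=3k-1$.
\item Let $S\setminus T$ consist of $k$ private neighbours of $u$ and $k$ private neighbours of $v$.
\item Join every vertex of $S\setminus T$ to every vertex of $T$, and make $G[T]$ $(2k-2)$-regular.
\end{itemize}
All degrees are at most $\Delta$. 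Every edge inside $T$ or between $T$ and $S\setminus T$ lies in a diamond with $e$. So $e$ has $(3k-1)^2+8k-2=\frac{9}{16}\Delta^2+\frac12\Delta-1$ D-neighbours, which exceeds your claimed bound by exactly $t$.
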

    
    In the same paper, the following Erd\H os-Ne\v set\v ril-type conjecture (in analogy with Conjecture \ref{erdos}) was proposed.
    
    \begin{Conjecture}[Wang~\cite{Wang-26}]\label{conj1}
        For every graph $G$ with maximum degree $\Delta$,
        \[\chi'_D(G) \le \frac{1}{2}\Delta^2 + \frac{1}{2}\Delta. \]
    \end{Conjecture}
    
    If true, this upper bound will be sharp because of the complete graph $K_{\Delta+1}$. This conjecture has been verified for $\Delta \le 5$ in~\cite{Wang-26}, and still remains open for $\Delta \ge 6$. 
    
    For planar graphs, Wang~\cite{Wang-26} proposed the following conjecture, which has recently been verified for $\Delta = 4$, $\Delta = 5$, and $\Delta \ge 33$ by Hu et al.~\cite{Hu-2026}, and remains open for $6 \le \Delta \le 32$.
    
    \begin{Conjecture}[Wang~\cite{Wang-26}]
        For every planar graph $G$ with maximum degree $\Delta\ge 4$,
        \[ 
	       \chi'_D(G)\le  \left\{ \begin{array}{ll}
		     9 &
		   \mbox{ if } \Delta = 4;
		   \\
		   10 & \mbox{ if } \Delta = 5;
          \\
          2\Delta - 1 &
          \mbox{ if } \Delta\ge 6.
	\end{array} \right.
	\]
    \end{Conjecture}

    In this paper, using a probabilistic method developed by Molloy and Reed \cite{MR-97}, we prove a stronger upper bound on $\chi'_D(G)$. More precisely, we improve the coefficient of $\Delta^2$ in Theorem~\ref{theorem} from $\frac{9}{16}$ to $(1 - 0.00016)\cdot\frac{9}{16}$.

    \begin{Theorem}\label{main}
        For every graph $G$ with sufficiently large maximum degree $\Delta$,
        \[\chi'_D(G) \le (1-0.00016)\cdot\frac{9}{16}\Delta^2.\]
    \end{Theorem}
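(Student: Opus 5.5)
Following the Molloy--Reed method, we work in the \emph{conflict graph} $H$: its vertex set is $E(G)$, and two edges of $G$ are adjacent in $H$ if they are incident or lie together in a diamond of $G$. A D-coloring of $G$ is exactly a proper vertex coloring of $H$. The greedy argument behind Theorem~\ref{theorem} amounts to $\Delta(H)\le \frac{9}{16}\Delta^2+\frac12\Delta-1$.

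We then split according to how sparse the neighbourhoods in $H$ are.

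\textbf{Sparse case.} Suppose that for every edge $e=uv$ the $H$-neighbourhood of $e$ spans at most $(1-\varepsilon)\binom{\Delta(H)}{2}$ edges of $H$, for an absolute $\varepsilon>0$. Then the Molloy--Reed theorem gives $\chi(H)\le (1-c)\Delta(H)$ for large $\Delta$, with $c$ depending only on $\varepsilon$; this is the usual naive random coloring plus the Lovász Local Lemma. The argument colours each vertex of $H$ with a uniformly random colour from $(1-c)\Delta(H)$ colours, uncolours conflicts, and shows that each neighbourhood keeps about $\varepsilon'\Delta(H)$ repeated colours. We then finish greedily, and the reserved savings supply the slack.

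\textbf{Dense case.} Otherwise we may assume $e$ has close to $\frac{9}{16}\Delta^2$ conflict neighbours. I would first redo Wang's count to see what forces this. The count is roughly: edges incident to $e$ ($\approx 2\Delta$), plus, for each common neighbour $w$ of $u,v$, edges $wx$ with $x\in N(u)\cup N(v)$, plus edges in diamonds where $e$ is a side edge. Near-extremality should force a rigid structure: $u$ and $v$ have about $\Delta/2$ (or $3\Delta/4$) common neighbours, and the graph is locally almost a blow-up of the extremal configuration. In that structure, two conflict neighbours $f,g$ of $e$ are themselves far from conflicting a positive fraction of the time. For example, edges from different common neighbours $w,w'$ to vertices of $N(u)\setminus N(v)$ need not be incident and need not lie in a common diamond. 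That yields $\Omega(\Delta^4)$ non-edges in $N_H(e)$, which contradicts density. So I would actually aim to prove directly that \emph{every} $e$ has either $|N_H(e)|\le (1-\delta)\frac{9}{16}\Delta^2$ or at least $\varepsilon\Delta^4$ non-adjacent pairs in $N_H(e)$. Vertices of the first kind have slack already, and a version of Molloy--Reed that tolerates low-degree vertices handles them, since they can be coloured last.

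The main obstacle is this structural dichotomy: quantifying non-adjacent pairs in $N_H(e)$ with explicit constants. I would parametrise by $t=|N(u)\cap N(v)|$ and by the neighbourhoods of the common neighbours, write $|N_H(e)|$ as a function of these, and bound the number of $H$-edges among conflict neighbours by counting the diamonds that contain both. Conflicts between two edges $wx$, $w'x'$ with $\{w,x\}\cap\{w',x'\}=\emptyset$ need a diamond on these four vertices, which requires many edges among them. A double-counting over $4$-sets should show such conflicts cover only a $(1-\varepsilon)$ fraction of pairs unless $|N_H(e)|$ is substantially below the extremal value. Plugging the resulting $\varepsilon$ into the explicit Molloy--Reed constant ($c\approx\varepsilon/36$ or similar) should produce a tiny constant such as $0.00016$.
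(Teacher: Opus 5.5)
Your overall architecture is the paper's. You use the conflict graph $H$ on $E(G)$ with $\Delta(H)\le X:=\frac{9}{16}\Delta^2+\frac{1}{2}\Delta-1$. You invoke the Molloy--Reed lemma; the paper uses it with condition $\gamma<\frac{\mu}{2(1-\gamma)}e^{-3/(1-\gamma)}$, $\mu=\frac{1}{150}$ and $\gamma=0.00016$. And you set up a per-edge dichotomy: either $|N_H(e)|$ is noticeably below $X$, or $N_H(e)$ contains $\Omega(\Delta^4)$ non-adjacent pairs. (No low-degree variant of Molloy--Reed is needed. Density is measured against $\binom{X}{2}$, so $|N_H(e)|\le(1-\delta)X$ already gives at most $\binom{(1-\delta)X}{2}<(1-\mu)\binom{X}{2}$ edges once $\delta$ exceeds about $\mu/2$.)

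The gap is that the dense alternative, which is the entire content of the theorem, is not proved. You call it the main obstacle and offer only heuristics. Observing that edges $wx,w'x'$ from distinct common neighbours ``need not'' lie in a common diamond establishes nothing. You must exhibit $\Omega(\Delta^4)$ such pairs for which at least two of $ww',xx',wx',w'x$ are non-edges. In the near-extremal regime almost all pairs between common and private neighbours are edges. So what is really needed is $\Omega(\Delta^2)$ non-edges inside the common neighbourhood and inside the private neighbourhoods, and nothing in your sketch produces them.

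The missing step is a degree count. Put $Z=N(u)\cap N(v)$, $W=(N(u)\cup N(v))\setminus(Z\cup\{u,v\})$, $\alpha=|Z|$ and $\beta=|W|=2(\Delta-1-\alpha)$; this uses $\Delta$-regular $G$, which suffices by monotonicity. Let $m,n$ count the edges inside $Z$ and between $Z$ and $W$. The non-incident edges seen by $e$ are exactly these $s=m+n$ edges. Your extra term ``edges in diamonds where $e$ is a side edge'' is already counted by your second term. The bounds $2m+n\le\alpha(\Delta-2)$ and $n\le\alpha\beta$ give $s\le-\alpha^2+(\frac{3}{2}\Delta-2)\alpha$.

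So $s>(\frac{9}{16}-\epsilon)\Delta^2$ forces $\alpha\approx\frac{3}{4}\Delta$ (not $\frac{1}{2}\Delta$) and $\beta\approx\frac{1}{2}\Delta$. Via $n\ge 2s-\alpha(\Delta-2)$, the number $t$ of $Z$--$W$ non-edges satisfies $t<2\epsilon\Delta^2$. Each $z\in Z$ then spends about $\beta$ of its degree on $W$, so $G[Z]$ misses $p\ge\frac{\alpha\beta}{4}-\frac{t}{2}$ edges. Each $w\in W$ spends about $\alpha$ on $Z$, so $G[W]$ misses $q\ge\frac{\beta^2}{4}-\frac{\beta}{2}-\frac{t}{2}$ edges.

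At least $pq-t\alpha\beta$ choices of a non-edge $z_1z_2$ in $G[Z]$ and a non-edge $w_1w_2$ in $G[W]$ have all four cross edges present. Such a $4$-set spans only four edges, so $\{z_1w_1,z_2w_2\}$ and $\{z_1w_2,z_2w_1\}$ are non-adjacent pairs in $N_H(e)$. With $\epsilon=0.0024$ this gives more than $0.0018\Delta^4>\frac{1}{150}\binom{X}{2}$ missing edges, which is what justifies $\mu=\frac{1}{150}$. The case $s\le(\frac{9}{16}-\epsilon)\Delta^2$ is your low-degree alternative. Without this argument and its constants, your proposal is a correct plan rather than a proof.
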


\section{Proof of Theorem \ref{main}}

It suffices to prove Theorem \ref{main} for $\Delta$-regular graphs, since every graph with maximum degree $\Delta$ is a subgraph of a $\Delta$-regular graph, and it is clear that $\chi_D'(K') \le \chi_D'(K)$ if $K'$ is a subgraph of $K$.

To prove Theorem \ref{main}, we will apply the following lemma due to Molloy and Reed \cite{MR-97}.

\begin{Lemma}[Molloy and Reed~\cite{MR-97}]\label{MRlemma}
    Let $\mu, \gamma > 0$ such that $\gamma < \frac{\mu}{2(1-\gamma)}e^{-3/(1-\gamma)}$. Let $H$ be a graph with sufficiently large maximum degree $X$, such that for each $v \in V(H)$, $H[N_H(v)]$ has at most $(1-\mu){X \choose 2}$ edges. Let $\chi(H)$ denote the chromatic number of $H$. Then $\chi(H) \le (1- \gamma)X$.
\end{Lemma}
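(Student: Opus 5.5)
The plan is the Molloy--Reed ``naive colouring'' argument: a random partial colouring, then the Lov\'asz Local Lemma, then a greedy completion. Set $C=\lfloor(1-\gamma)X\rfloor$. By the strict inequality in the hypothesis, there is a small slack $\varepsilon>0$ with $\gamma+\varepsilon<\frac{\mu}{2(1-\gamma)}e^{-3/(1-\gamma)}$. The goal of the first two steps is a partial proper colouring of $H$ with the colours $\{1,\dots,C\}$ in which, for every vertex $v$, many colours are \emph{repeated} on $N_H(v)$. Concretely, we want at least $\gamma X$ more colour occurrences on coloured neighbours of $v$ than distinct colours.

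\textbf{Random partial colouring.} Each vertex independently picks a uniformly random colour from $\{1,\dots,C\}$. A vertex keeps its colour if no neighbour picked the same colour; otherwise it is uncoloured. The result is always a proper partial colouring. For a vertex $v$, let $Z_v$ be the number of colours $c$ that are retained by at least two vertices of $N_H(v)$. We may assume every vertex has degree close to $X$, for instance by embedding $H$ in an $X$-regular graph.

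\textbf{Expected number of repeats.} Since $H[N_H(v)]$ has at most $(1-\mu)\binom{X}{2}$ edges, $N_H(v)$ contains at least about $\mu X^2/2$ non-adjacent pairs $\{u,w\}$. Fix such a pair and a colour $c$. Ask that $u$ and $w$ both pick $c$, that no neighbour of $u$ or $w$ picks $c$, and that no other vertex of $N_H(v)$ keeps $c$; this last condition is what lets us count each repeated colour once. These conditions forbid colour $c$ on at most about $3X$ vertices, so the probability is at least
\[
C^{-2}\,(1-1/C)^{3X}\approx C^{-2}e^{-3/(1-\gamma)}.
\]
Summing over the $C$ colours and the pairs gives
\[
\mathbb{E}[Z_v]\ \ge\ (1-o(1))\,\frac{\mu X^2}{2}\cdot\frac{1}{C}\,e^{-3/(1-\gamma)} \ \ge\ (\gamma+\varepsilon)X
\]
for large $X$.

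\textbf{Concentration and the Local Lemma.} We must show $\Pr[Z_v<\gamma X]\le e^{-X^{c}}$ for some $c>0$. I expect this to be the main obstacle, because $Z_v$ depends on the choices of up to $X^2$ vertices at distance at most $2$ from $v$, and it is not Lipschitz in a way that makes a simple bounded-differences bound strong enough. I would first try the standard Molloy--Reed decomposition $Z_v=A_v-B_v$. Here $A_v$ counts colours picked by at least two vertices of $N_H(v)$, or the relevant pairs. $B_v$ counts those colours among them that are not retained. Each is $O(1)$-Lipschitz and certifiable: changing one vertex's choice changes each by at most $2$, and $r$ witnesses certify a count of size $r$. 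Talagrand's inequality then concentrates each within $X^{3/4}$ of its mean. Each bad event $\{Z_v<\gamma X\}$ is determined by the choices within distance $2$ of $v$. So it is mutually independent of all but $X^{4}$ other bad events. Since $e\cdot e^{-X^{c}}(X^4+1)<1$ for large $X$, the symmetric Local Lemma gives a partial colouring with $Z_v\ge\gamma X$ for all $v$.

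\textbf{Greedy completion.} Colour the uncoloured vertices one by one with colours from $\{1,\dots,C\}$. When we reach $v$, the forbidden colours are at most
\[
(\#\text{uncoloured neighbours})+(\#\text{distinct colours on coloured neighbours})\ \le\ X-Z_v\ \le\ (1-\gamma)X-1<C,
\]
after slightly adjusting $\gamma$ to absorb rounding. Indeed, each repeated colour saves at least one slot relative to the $X$ neighbours, and later greedy colours only replace uncoloured neighbours by at most one colour each. Hence a free colour always exists, and $\chi(H)\le C\le(1-\gamma)X$.
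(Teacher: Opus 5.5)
Your outline is the Molloy--Reed naive-colouring argument: random partial colouring with $\lfloor(1-\gamma)X\rfloor$ colours, counting colours repeated on $N_H(v)$, Talagrand plus the Local Lemma, and greedy completion. That is exactly what the paper relies on when it cites Molloy and Reed without giving a proof, and your expectation estimate $C\cdot\mu\binom{X}{2}\cdot C^{-2}e^{-3X/C}\approx\frac{\mu}{2(1-\gamma)}e^{-3/(1-\gamma)}X$ is precisely where the hypothesis on $\gamma$ comes from.

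The one step to state more carefully is the decomposition. Take $B_v$ to be the number of colours assigned to at least two vertices of $N_H(v)$ and removed from at least one of them. Then $B_v\ge s$ is certified by $3s$ trials, and $Z_v\ge A_v-B_v$. An inequality is all the greedy step needs, and your expectation event already lies in $A_v-B_v$. By contrast, an exact identity $Z_v=A_v-B_v$ would force $B_v$ to count colours retained by at most one neighbour, and that cannot be certified with $O(1)$ trials per colour.
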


In order to apply this lemma, for every graph $G$, we will construct an auxiliary graph $H$ such that $\chi(H)=\chi'_D(G)$.

For a graph $G$ and two edges $e$ and $e'$ in $G$, we say that $e$ \emph{sees} $e'$ if 
    \begin{itemize}
        \item $e$ and $e'$ are incident, or
        \item $e$ and $e'$ are not incident but lie in a common diamond subgraph.
    \end{itemize}

Under a D-coloring of $G$, any two edges seeing each other get distinct colors. We define an auxiliary graph $H$ of $G$, where $V(H) = E(G)$, and two vertices $e,e'\in V(H)$ are adjacent if and only if $e$ sees $e'$ in $G$. Note that $\chi(H)=\chi'_D(G)$. In the proof of Theorem \ref{theorem}, Wang \cite{Wang-26} showed that an edge $e\in E(G)$ sees at most $\frac{9}{16}\Delta^2+\frac{1}{2}\Delta-1$ edges, which means a vertex $e\in V(H)$ has at most 
\begin{align}
    \frac{9}{16}\Delta^2+\frac{1}{2}\Delta-1 \label{neighbor}
\end{align}
neighbors.

In the following lemma, we prove that for any $e\in V(H)$, its neighborhood graph $H[N_H(e)]$, which is the subgraph of $H$ induced by the neighbors of $e$ in $H$, cannot be too dense. Note that Lemma \ref{ourlemma} allows us to take $X=\frac{9}{16}\Delta^2 + \frac{1}{2}\Delta - 1$, $\mu = \frac{1}{150}$, and $\gamma = 0.00016$ in Lemma \ref{MRlemma} to complete the proof of Theorem \ref{main}.

\begin{Lemma}\label{ourlemma}
    Let $G$ be a $\Delta$-regular graph with sufficiently large $\Delta$, and let $H$ be the auxiliary graph of $G$ defined as above. Then for each $e \in V(H)$, $H[N_H(e)]$ has at most $ ( 1 - \frac{1}{150} ) {\frac{9}{16}\Delta^2+\frac{1}{2}\Delta-1 \choose 2} $ edges.
\end{Lemma}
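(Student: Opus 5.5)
Fix $e=uv\in E(G)$ and write $X=\frac{9}{16}\Delta^2+\frac12\Delta-1$ and $N=N_H(e)$. The plan is a dichotomy. If $|N|$ is noticeably smaller than $X$, say $|N|\le (1-\frac{1}{300})X$, then trivially $e(H[N])\le\binom{|N|}{2}\le(1-\frac{1}{150}+o(1))\binom{X}{2}$, which suffices (with a little room in the constants). So we may assume $|N|\ge(1-\frac1{300})X$, i.e.\ $e$ sees nearly the extremal number $\frac{9}{16}\Delta^2$ of edges. In that case we will exhibit $\Omega(\Delta^4)$ pairs of edges in $N$ that do \emph{not} see each other, at least $\frac{1}{150}\binom{X}{2}\approx \frac{1}{150}\cdot\frac{81}{512}\Delta^4$ of them.

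The first step is to revisit Wang's counting behind \eqref{neighbor} to understand near-extremal structure. The edges seen by $e$ are the $2\Delta-2$ edges incident to $u$ or $v$, together with edges $e'$ lying in a common diamond with $e$. Such $e'$ fall into types according to whether $e$ is the middle (diagonal) edge of the diamond or a side edge. In both cases $e'$ has an endpoint in $N(u)\cup N(v)$, and the count $\frac{9}{16}\Delta^2$ arises from an optimization over $t=|N(u)\cap N(v)|$: roughly $t\Delta$ edges from common neighbors (diagonal case) plus edges from $N(u)\setminus N(v)$ or $N(v)\setminus N(u)$ into common neighbors and their neighbors, giving a quadratic in $t$ maximized near $t=\frac34\Delta$ or similar. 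From this I will extract that near-extremality forces $t=\Theta(\Delta)$ and forces $\Omega(\Delta^2)$ seen edges $e'=xy$ with $x\in N(u)\cap N(v)$ and $y$ outside $N(u)\cup N(v)\cup\{u,v\}$. Moreover, most such $y$ must be ``spread out'', since heavy overlaps would reduce $|N|$ below the threshold.

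The second step is the non-adjacency count. Take two seen edges $f=xy$, $f'=x'y'$ with $x\ne x'$ common neighbors and $y,y'$ outside. They are incident only if $y=y'$, and since each vertex has degree $\Delta$ this happens for only $O(\Delta^3)$ pairs. They see each other through a diamond only if the diamond uses edges among $\{x,y,x',y'\}$; a diamond containing two disjoint edges $xy,x'y'$ as non-incident edges must be spanned exactly by $\{x,y,x',y'\}$ with at least three of the four cross edges $xx',xy',yx',yy'$ present. So a non-seeing pair is guaranteed whenever at most two of these cross edges exist. For the count, fix $f$; the edges $f'$ with $y'\in N(x)\cup N(y)$ or $x'\in N(y)$ number $O(\Delta^2)$ in total, but we need a constant fraction of the $\Theta(\Delta^2)$ candidates $f'$ to avoid them. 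For this I would count, over the set $S$ of seen edges from common neighbors to outside vertices, the total number of 4-vertex configurations with three cross edges, bounding it via degrees: each of $x$ and $y$ has only $\Delta$ neighbors, which limits the choice of $f'$ to at most about $2\Delta\cdot\Delta$ edges. This is comparable to $|S|$, so the crude bound alone is insufficient.

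The main obstacle is therefore to make the constant work, and to do it I would instead use that $y\notin N(u)\cup N(v)$: requiring three cross edges forces $x'\in N(y)$ or $y'\in N(x)$ together with $y'\in N(y)$ or $x'\in N(x)$, a two-fold constraint. Since $N(x)$ already contains $u,v$ and many common neighbors (because the near-extremal structure makes $G[N(u)\cap N(v)]$ sparse or dense in a controlled way), I expect this to give $O(\Delta^2)$ bad $f'$ per $f$ with an explicit constant less than $|S|$, leaving $\ge c|S|^2$ non-seeing pairs. I will then check that $c|S|^2/2\ge\frac1{150}\binom X2$. If this fails in some subcase, for instance when $G[N(u)\cap N(v)]$ is nearly complete, I would treat that case separately: then $|N|$ itself drops, since common neighbors' edges overlap, and we return to the first alternative.
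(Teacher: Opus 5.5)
\textbf{There is a genuine gap: your second step counts pairs of edges that do not lie in $N_H(e)$.} Your dichotomy matches the paper's: either $e$ sees noticeably fewer than $X$ edges, or it is near-extremal and you exhibit many non-adjacent pairs. Your local criterion is also correct: two disjoint edges see each other only if at least three of the four cross pairs are edges. The problem is your description of $N_H(e)$.

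\emph{What $e$ actually sees.} The diagonal of a diamond meets every other edge of it, so your ``diagonal case'' adds nothing beyond the $2\Delta-2$ incident edges. If $e'$ is not incident with $e=uv$ but lies in a common diamond, then $e$ and $e'$ are opposite side edges. This forces $e'=xy$ with $x\in Z=N(u)\cap N(v)$ and $y\in (N(u)\cup N(v))\setminus\{u,v\}$, so both endpoints lie in the neighbourhood. An edge $xy$ with $x\in Z$ and $y\notin N(u)\cup N(v)\cup\{u,v\}$ is never seen, because $\{u,v,x,y\}$ spans only the four edges $uv,ux,vx,xy$. So your set $S$ is disjoint from $N_H(e)$.

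\emph{The count is also missing.} Even granting a correct set, the decisive step is only asserted (``I expect this to give\dots''), right after you observe that the crude degree bound is insufficient. Nothing in the proposal produces the required $\frac{1}{150}\binom{X}{2}\approx 0.00105\Delta^4$ non-adjacent pairs.

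\emph{The missing idea.} You need to extract the near-extremal structure from exact degree counting.
\begin{itemize}
\item Set $\alpha=|Z|$ and $W=(N(u)\cup N(v))\setminus(Z\cup\{u,v\})$, so $|W|=\beta=2(\Delta-1-\alpha)$. Let $m$ be the number of edges inside $Z$ and $n$ the number between $Z$ and $W$, so the number of non-incident seen edges is $s=m+n$.
\item Regularity gives $2m+n\le\alpha(\Delta-2)$. Together with $n\le\alpha\beta$ this yields $s\le-\alpha^2+(\frac32\Delta-2)\alpha$.
\item If $s>(\frac{9}{16}-\epsilon)\Delta^2$, this forces $\alpha\approx\frac34\Delta$ and $\beta\approx\frac12\Delta$. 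The $Z$--$W$ bipartite graph then misses only $t<2\epsilon\Delta^2$ pairs.
\item Degree counting again shows that $G[Z]$ has $p\ge\frac{\alpha\beta}{4}-\frac t2$ non-edges and $G[W]$ has $q\ge\frac{\beta^2}{4}-\frac\beta2-\frac t2$ non-edges. In particular $G[Z]$ is never nearly complete here, so the subcase you worry about does not arise.
\item Take a non-edge $z_1z_2$ in $Z$ and a non-edge $w_1w_2$ in $W$ with all four cross edges present. By your own criterion, $\{z_1w_1,z_2w_2\}$ and $\{z_1w_2,z_2w_1\}$ are non-adjacent pairs of $Z$--$W$ edges, and these do lie in $H[N_H(e)]$.
\item There are at least $pq-t\alpha\beta>0.0009\Delta^4$ such configurations, which gives enough missing edges.
\end{itemize}

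\emph{Minor point.} With threshold $(1-\frac{1}{300})X$, your first case gives $(1-\frac{1}{300})^2=1-\frac{1}{150}+\frac{1}{90000}$, which is slightly too large. You need to lower the threshold a little, which is harmless.
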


\begin{proof}
    Consider an edge $e = uv$ in $G$. Let $X = N_G(u) \backslash \{v\}$, $Y = N_G(v)\backslash \{u\}$, $Z = X \cap Y$, and $W = (X \cup Y) \setminus Z$. Denote $\alpha = |Z|$ and $\beta = |W|$. Since $G$ is $\Delta$-regular, we have $\beta = 2(\Delta - 1 - \alpha) $. Let $m$ be the number of edges in $G$ with both endpoints in $Z$, let $n$ be the number of edges in $G$ with one endpoint in $Z$ and the other endpoint in $W$, and let $s = m + n$. Note that $s$ is the number of edges seeing $e$ but not incident with $e$, and as $e$ is incident with $2\Delta-2$ edges, we know that $e$ sees $s+2\Delta-2$ edges in $G$. So, by the definition of $H$, there are $s+2\Delta-2$ vertices in $H[N_H(e)]$.

    Each edge with both endpoints in $Z$ contributes 2 to the degree sum of the vertices in $Z$, and each edge with one endpoint in $Z$ and the other endpoint in $W$ contributes 1 to the degree sum of the vertices in $Z$, so we have
    \begin{equation}
        2m + n \le \alpha (\Delta - 2). \label{degsum}
    \end{equation}
    This, together with $n \le \alpha \beta = 2\alpha(\Delta - 1 - \alpha)$, yields that 
    \begin{align}
        s = m + n &\le \frac{\alpha(\Delta - 2) - n}{2} + n \nonumber \\
                  &= \frac{\alpha(\Delta - 2) + n}{2} \nonumber \\
                  &\le \frac{\alpha(\Delta - 2) + 2\alpha(\Delta-1-\alpha)}{2} \nonumber \\
                  &= -\alpha^2 + \Bigl(\frac{3}{2}\Delta-2\Bigr)\alpha \nonumber \\
                  &= - \Bigl(\alpha-\Bigl(\frac{3}{4}\Delta - 1\Bigr)\Bigr)^2 + \Bigl(\frac{3}{4}\Delta - 1\Bigr)^2. \label{sbound}
    \end{align}

    Setting $\epsilon = 0.0024$, there are two cases.
    
    \medskip
    \emph{Case 1.} $ s \le (\frac{9}{16} - \epsilon)\Delta^2$. In this case, we have $|H[N_H(e)]| = s+2\Delta - 2 \le (\frac{9}{16} - \epsilon)\Delta^2 + 2\Delta - 2$. For sufficiently large $\Delta$, it holds that $2\Delta - 2 < \frac{\epsilon}{10}\Delta^2$. Hence, $|H[N_H(e)]| < (\frac{9}{16} - \epsilon)\Delta^2 + \frac{\epsilon}{10}\Delta^2 = (\frac{9}{16} - \frac{9\epsilon}{10})\Delta^2$. Therefore, $H[N_H(e)]$ has at most ${(\frac{9}{16} - \frac{9\epsilon}{10})\Delta^2  \choose  2} < (1 - \frac{1}{150}){\frac{9}{16}\Delta^2+\frac{1}{2}\Delta-1 \choose 2}$ edges.

    \medskip
    \emph{Case 2.} $ s > (\frac{9}{16} - \epsilon)\Delta^2$. Combining $ s > (\frac{9}{16} - \epsilon)\Delta^2$ with \eqref{sbound}, we have
    \begin{align*}
        \Bigl(\frac{9}{16} - \epsilon\Bigr)\Delta^2 < - \Bigl(\alpha-\Bigl(\frac{3}{4}\Delta - 1\Bigr)\Bigr)^2 + \Bigl(\frac{3}{4}\Delta - 1\Bigr)^2,
    \end{align*}
    which implies
    \[\Bigl(\alpha-\Bigl(\frac{3}{4}\Delta - 1\Bigr)\Bigr)^2 < \Bigl(\frac{3}{4}\Delta - 1\Bigr)^2 - \Bigl(\frac{9}{16} - \epsilon\Bigr)\Delta^2 = \epsilon\Delta^2 - \frac{3}{2}\Delta + 1 < \epsilon\Delta^2. \]
    Thus $ -\sqrt{\epsilon}\Delta + \frac{3}{4}\Delta - 1 < \alpha < \sqrt{\epsilon}\Delta + \frac{3}{4}\Delta - 1 $. So for sufficiently large $\Delta$, we have
    \[ 0.7\Delta < \alpha <  0.8\Delta \text{\ \ and\ \ } 0.4\Delta  <  \beta = 2(\Delta - 1 -\alpha) < 0.6\Delta. \]
    For $\alpha\in (0.7\Delta,\,0.8\Delta)$, $\alpha\beta$ decreases as $\alpha$ increases, so 
    \begin{align}
        0.32\Delta^2 < \alpha\beta < 0.42\Delta^2. \label{alphabeta}
    \end{align}

    Let $t = \alpha\beta - n$ denote the number of non-edges between $Z$ and $W$. By \eqref{degsum}, we have $2s - n = 2m + n \le \alpha(\Delta - 2)$, which implies $n \ge 2s - \alpha(\Delta - 2)$. So for sufficiently large $\Delta$, we have
    \begin{align}
                t &\le \alpha\beta - 2s + \alpha(\Delta - 2) \nonumber \\
                  &= \alpha \cdot 2(\Delta - 1 - \alpha)  + \alpha(\Delta - 2) - 2s \nonumber \\
                  &= -2\alpha^2 + 3\alpha\Delta - 4\alpha - 2s \nonumber \\
                  &= 2\Bigl(-\alpha^2 + \Bigl(\frac{3}{2}\Delta - 2\Bigr)\alpha\Bigr) - 2s \nonumber \\
                  &< 2\Bigl(\frac{3}{4}\Delta - 1\Bigr)^2 - 2 \Bigl(\frac{9}{16} - \epsilon\Bigr)\Delta^2 \nonumber \\
                  &= 2\epsilon\Delta^2 - 3\Delta + 2 \nonumber \\
                  &<2\epsilon\Delta^2 = 0.0048\Delta^2. \label{tbound}
    \end{align}
    
    We next count the number of non-edges in $G[Z]$ and $G[W]$. Let $p$ be the number of non-edges in $G[Z]$ and $q$ be the number of non-edges in $G[W]$. First, it is evident that $p = {\alpha \choose 2} - m$ and $q = {\beta \choose 2} - \frac{\beta(\Delta-1) - n}{2}$. Then, by \eqref{degsum}, $n = \alpha\beta - t$, $\beta = 2(\Delta - 1 -\alpha)$, \eqref{alphabeta}, and \eqref{tbound}, we have
    \begin{align}
        p = {\alpha \choose 2} - m &\ge \frac{\alpha(\alpha - 1) - \alpha(\Delta - 2)+n}{2} \nonumber \\
                                    &= \frac{\alpha^2 + \alpha - \alpha\Delta + n}{2} \nonumber \\
                                    &= \frac{\alpha(\alpha + 1 - \Delta) + \alpha\beta - t}{2} \nonumber \\
                                    &= \frac{\alpha(-\frac{1}{2}\beta) + \alpha\beta - t}{2} \nonumber \\
                                    &= \frac{\alpha\beta}{4} - \frac{t}{2} \nonumber \\
                                    &\ge 0.0776\Delta^2. \label{pbound}
    \end{align}
    Similarly, by $n = \alpha\beta - t$, $\beta = 2(\Delta - 1 -\alpha)$, $\beta\in(0.4\Delta,0.6\Delta)$, and \eqref{tbound}, we have
    \begin{align}
        q = {\beta \choose 2} - \frac{\beta(\Delta-1) - n}{2} &= \frac{\beta^2 - \beta\Delta + n}{2} \nonumber \\
                                            &= \frac{\beta^2 - \beta\Delta+ \alpha\beta - t}{2} \nonumber \\
                                            &= \frac{\beta^2 + \beta(\alpha-\Delta) - t}{2} \nonumber \\
                                            &= \frac{\beta^2 + \beta(-\frac{1}{2}\beta-1) - t}{2} \nonumber \\
                                            &= \frac{\beta^2}{4} - \frac{\beta}{2} - \frac{t}{2} \nonumber \\
                                            &\ge 0.0376\Delta^2, \label{qbound}
    \end{align}
    when $\Delta$ is sufficiently large.
        
    For two non-edges $z_1z_2,\,w_1w_2$ in $G$ with $z_1,z_2\in Z$ and $w_1,w_2\in W$, we call $(z_1z_2,\,w_1w_2)$ a \emph{good pair} if $\{z_1w_1, z_1w_2, z_2w_1, z_2w_2\}\subseteq E(G)$. If $(z_1z_2,\,w_1w_2)$ is a good pair, then $z_1w_1$ and $z_2w_2$ do not see each other, and $z_1w_2$ and $z_2w_1$ do not see each other. This means, in $H[N_H(e)]$, $z_1w_1$ is not adjacent to $z_2w_2$, and $z_1w_2$ is not adjacent to $z_2w_1$. Thus, each good pair in $G$ gives us two non-edges in $H[N_H(e)]$.

    Now we count the number of good pairs in $G$. First, there are $p$ non-edges in $G[Z]$ and $q$ non-edges in $G[W]$, so we have $pq$ pairs of non-edges in the form $(z_1z_2,\,w_1w_2)$ with $z_1,z_2\in Z$ and $w_1,w_2\in W$. However, for some of these pairs, we may not have $\{z_1w_1, z_1w_2, z_2w_1, z_2w_2\}\subseteq E(G)$. As there are $t$ non-edges between $Z$ and $W$, there are at most $t(\alpha - 1)(\beta - 1) < t\alpha\beta$ pairs of non-edges in the form $(z_1z_2,\,w_1w_2)$ without $\{z_1w_1, z_1w_2, z_2w_1, z_2w_2\}\subseteq E(G)$. Thus, we have at least $pq-t\alpha\beta > 0.0009\Delta^4$ good pairs, by \eqref{alphabeta}, \eqref{tbound}, \eqref{pbound}, and \eqref{qbound}. These good pairs give us $0.0018\Delta^4$ non-edges in $H[N_H(e)]$.
    
    As mentioned in \eqref{neighbor}, we have $|H[N_H(e)]| \le \frac{9}{16}\Delta^2 + \frac{1}{2}\Delta - 1$. Therefore, we conclude that $H[N_H(e)]$ has at most ${\frac{9}{16}\Delta^2 + \frac{1}{2}\Delta - 1 \choose 2} - 0.0018\Delta^4 < ( 1 - \frac{1}{150} ) {\frac{9}{16}\Delta^2+\frac{1}{2}\Delta-1 \choose 2} $ edges, completing the proof of Lemma \ref{ourlemma}.
\end{proof} 

As mentioned earlier, taking $X=\frac{9}{16}\Delta^2 + \frac{1}{2}\Delta - 1$, $\mu = \frac{1}{150}$, and $\gamma = 0.00016$ in Lemma \ref{MRlemma} completes the proof of Theorem \ref{main}.

\section{Remarks}
We made no attempt to optimize $\mu$ and $\gamma$. In fact, if we also consider the edges in $G[Z]$ that do not see each other, then a more careful calculation may improve $\gamma$ to $0.0003$, but the same method appears unlikely to bring the upper bound anywhere close to $\chi'_D(G) \le \frac{1}{2}\Delta^2 + \frac{1}{2}\Delta$, as conjectured in Conjecture \ref{conj1}.

\section*{Statements and Declarations}

\textbf{Funding}: The authors declare that no funds, grants, or other support were received during the preparation of this manuscript.

\noindent\textbf{Data Availability}: Data sharing is not applicable to this article, as no new data were created or analyzed in this study.

\noindent\textbf{Competing Interests}: The authors have no relevant financial or non-financial interests to disclose.
    
\end{document}